\newcommand{\Rr}{\mathbb{R}}
\newcommand{\Nn}{\mathbb{N}}
\newcommand{\A}{\mathcal{A}}
\newcommand{\EE}{\mathcal{E}}
\newcommand{\MM}{\mathcal{M}}
\newcommand{\NN}{\mathcal{N}}
\newcommand{\UU}{\mathcal{U}}
\newcommand{\Per}{\mbox{Per}}
\newcounter{main}
\numberwithin{equation}{section}
\newtheorem{theorem}{Theorem}[section]
\newtheorem{proposition}[theorem]{Proposition}
\newtheorem{lemma}[theorem]{Lemma}
\newtheorem{maintheorem}{Theorem}
\newcommand{\blanksquare}{\,\,\,$\sqcup\!\!\!\!\sqcap$}
\newcounter{example}
{{\stepcounter{example}}{\flushleft {\bf Example \arabic{example}:}}}%
{\par}
\title[Elliptic dynamics of 4-dim Hamiltonians]
{Hamiltonian elliptic dynamics on symplectic $4$-manifolds}
\author[M. Bessa]{M\'{a}rio Bessa}
\address{Centro de Matem\'atica da Universidade do Porto, 
Rua do Campo Alegre, 687, 
4169-007 Porto, Portugal}
\email{bessa@fc.up.pt}
\author[J. Lopes Dias]{Jo\~{a}o Lopes Dias}
\address{Departamento de Matem\'atica, ISEG, 
Universidade T\'ecnica de Lisboa,
Rua do Quelhas 6, 1200-781 Lisboa, Portugal}
\email{jldias@iseg.utl.pt}
\begin{document}

\begin{abstract}
We consider $C^2$ Hamiltonian functions on compact $4$-dimensional symplectic manifolds to study elliptic dynamics of the Hamiltonian flow, namely the so-called Newhouse dichotomy.
We show that for any open set $U$ intersecting a far from Anosov regular energy surface, there is a nearby Hamiltonian having an elliptic closed orbit through $U$.
Moreover, this implies that for far from Anosov regular energy surfaces of a $C^2$-generic Hamiltonian the elliptic closed orbits are generic.
\end{abstract}

\maketitle

%\noindent\emph{MSC 2000:} Primary: 37J25, 37D30; Secondary: 37C27.\\
%\emph{keywords:} Hamiltonian vector field, Partial hyperbolicity, elliptic point.\\

%%%%%%%%%%%%%%%%%%%%%%%%%%%%%%%%%%%%%%%%%
\section{Introduction}

Hamiltonian systems form a fundamental subclass of dynamical systems. Their importance follows from the vast range of applications throughout different branches of science.
Generic properties of such systems are thus of great interest since they give us the ``typical'' behaviour (in some appropriate sense) that one could expect from the class of models at hand (cf.~\cite{R}).
There are, of course, considerable limitations to the amount of information one can extract from a specific system by looking at generic cases.
Nevertheless, it is of great utility to learn that a selected model can be slightly perturbed in order to obtain dynamics we understand in a reasonable way.

The well-known Newhouse dichotomy is among such generic properties involving certain classes of conservative dynamical systems.
In a broad sense, it deals with systems whose dynamics can only be either of uniformly hyperbolic type or else the elliptic orbits are dense. 
All other possibilities are excluded.

The Newhouse dichotomy was first proved for $C^1$-generic symplectomorphisms in~\cite{N}, and extensions have appeared afterwards~\cite{A,SX}.
Those were all done for discrete-time dynamics.
More recently, a proof for divergence-free $3$-flows is contained in the work of Bessa and Duarte~\cite{BD} and it is, as far as we are aware, the only complete result of this nature concerning continuous-time systems.
As a matter of fact, the Hamiltonian case is already mentioned in~\cite[section 6]{N}. However, it is only under the form of a statement and a plan based in the symplectomorphisms approach. 
Briefly, that consists in the following three steps: the unfolding of homoclinic tangencies by a weakened version of Takens' homoclinic point theorem;
the consequent implication that for a $C^1$-generic symplectomorphism the homoclinic points are dense; and an argument deriving that, unless Anosov, those maps can be $C^1$-approximated by others yielding an elliptic periodic point in any pre-assigned open set.

Our proof of the Newhouse dichotomy for Hamiltonian flows in symplectic compact $4$-manifolds, is considerably distinct.
Moreover, we actually improve Newhouse's statement~\cite[Theorem 6.2]{N} to get a residual set of energy surfaces holding the dichotomy, as well as filling a gap in the literature concerning this problem.
Our method explores a weak form of hyperbolicity, due to Ma\~n\'e, called \emph{dominated splitting} (see~\cite{BDV} and references therein).
When there is absence of domination, we follow an adaptation of Ma\~n\'e's perturbation techniques contained in~\cite{BD} to create closed elliptic orbits.
On the other hand, away from the uniformly hyperbolic energy levels we can exclude domination. For that we rely on recently available results, viz.
Vivier's Hamiltonian version of Franks' lemma~\cite{V}
% to realize perturbations of the transversal linear Poincar\'e flow, 
and the authors' theorem on the Bochi-Ma\~n\'e's dichotomy for Hamiltonians~\cite{BJD} (see also~\cite{Bo,B2}).
%Both are up to now only available in the $4$-dimensional setting.

%This lemma  (Theorem~\ref{Vivier}) will used to proof our Theorem~\ref{thm 1} below.

%The other essential result that will be used in our proof of Theorem~\ref{thm 1} states the following (see~\cite{BJD}). For a $C^2$-generic Hamiltonian there is an open mod 0 and dense subset of energy surfaces that yield the Bochi-Ma\~n\'e dichotomy: each one either is Anosov or has zero Lyapunov exponents a.e. (for the natural measures). Several lemmas that appear in its proof will also be of use here.

%%%%%%%%%%%%%%%%%%%%%%%%%%%%%%%%%%%%%%%%%
%\subsection{Main results}
\medskip

Let $(M,\omega)$ be a smooth symplectic compact $4$-manifold with a smooth boundary $\partial M$ (including the case $\partial M=\emptyset$).
We denote $C^s(M,\Rr)$, $2\leq s\leq\infty$, as the set of real-valued functions on $M$ that are constant on $\partial M$, the $C^s$ Hamiltonians. This set is endowed with the $C^2$-topology.

An energy surface is a connected component of an (invariant) level set $H^{-1}(e)$ of a Hamiltonian $H$, where $e\in H(M)\subset\Rr$ is called the energy. It is regular if it does not contain critical points. 
A regular energy surface is Anosov if it is uniformly hyperbolic. 
It is far from Anosov if not in the closure of Anosov surfaces. 
Notice that by structural stability the union of Anosov energies is open.
Moreover, Anosov surfaces do not contain elliptic closed orbits.

\begin{maintheorem}\label{thm 1}
Given an open subset $U\subset M$, if a $C^s$ Hamiltonian, $2\leq s\leq\infty$, has a far from Anosov regular energy surface intersecting $U$, then it can be $C^{2}$-approximated by a $C^\infty$ Hamiltonian having an closed elliptic orbit through $U$.
\end{maintheorem}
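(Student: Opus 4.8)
The plan is to dichotomize according to whether the far from Anosov regular energy surface $\mathcal{E}\subset H^{-1}(e)$ intersecting $U$ admits a dominated splitting for the (transversal) linearized Hamiltonian flow or not. Since $\mathcal{E}$ is far from Anosov but we want to produce an elliptic orbit, the first task is to rule out domination after a small perturbation. Here I would invoke the authors' Hamiltonian Bochi--Ma\~n\'e dichotomy from~\cite{BJD}: on an energy surface that cannot be $C^2$-approximated by ones carrying an Anosov energy level, the Hamiltonian can be $C^2$-perturbed so that the linearized flow over $\mathcal{E}$ is \emph{not} uniformly hyperbolic and, moreover, has vanishing (transversal) Lyapunov exponents on a positive measure set — in particular there is no dominated splitting over the relevant invariant set. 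The point is that domination plus recurrence would force, via the Bochi--Ma\~n\'e mechanism, either uniform hyperbolicity or a contradiction with the "far from Anosov" hypothesis. So after an arbitrarily small $C^2$ perturbation we may assume no dominated splitting over a compact invariant subset of $\mathcal{E}$ meeting $U$ (shrinking $U$ if needed, using that closed orbits will be made to pass through a point of $\mathcal{E}\cap U$).

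Next, with domination excluded, I would run the Ma\~n\'e-type perturbation argument adapted from Bessa--Duarte~\cite{BD} to the Hamiltonian setting. The absence of a dominated splitting over a nontrivial invariant set means the angles between the "would-be" stable and unstable directions along long orbit segments can be made arbitrarily small; concretely, there are points whose orbit segments of controlled length have iterated linear Hamiltonian cocycle maps that are "almost non-hyperbolic" in the sense that a small rotation can be inserted. Using Vivier's Hamiltonian version of Franks' lemma~\cite{V}, such a prescribed small perturbation of the linearized Poincar\'e flow along a finite orbit segment is realizable by an actual $C^2$-small perturbation of $H$, supported near that segment and preserving the energy surface. The strategy is then: (i) find a recurrent orbit segment in $\mathcal{E}\cap U$ returning close to its start; (ii) close it up by a further Franks/Pugh-type $C^2$ perturbation (a Hamiltonian closing lemma argument, again via~\cite{V}) to obtain a genuine closed orbit $\gamma$ through a point near $U$, with controlled linearized return map; (iii) apply Vivier's lemma once more to compose that return map with a small elliptic rotation, turning the eigenvalues of the transversal Poincar\'e return map into a complex pair on the unit circle — i.e. making $\gamma$ elliptic. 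Finally, since the $C^\infty$ Hamiltonians are $C^2$-dense, a last arbitrarily small smoothing perturbation (chosen not to destroy the ellipticity, which is an open condition on the finite-dimensional return map) yields a $C^\infty$ Hamiltonian with an elliptic closed orbit through $U$.

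The main obstacle I expect is step (ii)--(iii): carrying out the closing-and-rotating simultaneously while respecting the three competing constraints — the perturbation must be $C^2$-small, it must keep the orbit inside (a slight deformation of) the energy surface so that "elliptic closed orbit through $U$" still makes sense, and the successive perturbations (exclude domination, close the orbit, install the rotation, smooth) must not undo one another. This is delicate because the Hamiltonian Franks lemma controls the linearized \emph{Poincar\'e} flow transverse to the energy surface and along the flow direction the perturbation is constrained; bookkeeping the supports so they are disjoint or nested appropriately, and ensuring the size estimates compound to something still $C^2$-small, is where the real work lies. A secondary subtlety is the passage from "no domination" to "small angles realizing a rotation": one must quantify, uniformly along a returning segment, how non-dominated behaviour translates into an admissible perturbation size for Vivier's lemma, analogous to the angle-estimate lemmas in~\cite{BD,BJD}. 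Once these estimates are in place, the ellipticity is produced by an explicit $2\times 2$ rotation composed with the return cocycle, and openness of ellipticity handles the final $C^\infty$ approximation.
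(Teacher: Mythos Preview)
Your proposal uses the same three pillars as the paper---the Bochi--Ma\~n\'e dichotomy for Hamiltonians from~\cite{BJD}, the Pugh--Robinson closing lemma, and Vivier's Hamiltonian Franks lemma---and the overall strategy (perturb to zero Lyapunov exponents, close up a recurrent orbit through $U$, then perturb the transversal cocycle to force ellipticity) is exactly the engine of the paper's proof. The paper organizes this a bit differently: it first sets up two perturbation tools (Propositions~\ref{propA} and~\ref{propB}) that turn any \emph{non-dominated} hyperbolic closed orbit into an elliptic one, then argues by contradiction that the case ``all closed orbits through $U$ are $m$-dominated'' cannot persist, precisely by running your steps (i)--(iii) to manufacture a weakly hyperbolic closed orbit violating the persistent domination. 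Your direct route---skip the trichotomy and go straight to the zero-Lyapunov argument---is legitimate and arguably cleaner, since the elliptic orbit produced in the paper's ``contradiction'' is already the object the theorem asks for.

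One point deserves care in your step~(iii). After closing, the return map $\Phi^{\tau}(p)$ satisfies $\|\Phi^{\tau}(p)\|<e^{\delta\tau}$ with $\delta$ small, but since $\tau$ is large this norm need not be close to $1$; a \emph{single} small rotation composed with the full return map will not in general make it elliptic. What is needed (and what the paper does via Proposition~\ref{propB} and the ``cancel the remaining hyperbolicity'' step) is to concatenate the orbit into $\tau$ time-one arcs and apply Vivier's lemma on each, so that the accumulated perturbation, of per-step size $O(\delta)$, overcomes the $e^{\delta\tau}$ growth. Equivalently, one observes that $\delta<\tfrac{\log 2}{2m}$ forces the closed orbit to lack $m$-domination, and then invokes the small-angle/no-domination lemmas from~\cite{BD}. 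Your ``obstacles'' paragraph shows you sense this, but your phrase ``compose that return map with a small elliptic rotation'' undersells the mechanism; make the distributed nature of the perturbation explicit.
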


As an almost direct consequence we arrive at the Newhouse dichotomy for $4$-dimensional Hamiltonians. Recall that for a $C^2$-generic Hamiltonian all but finitely many points are regular.

\begin{maintheorem}\label{thm 3}
For a $C^2$-generic Hamiltonian $H\in C^s(M,\Rr)$, $2\leq s\leq\infty$, the union of the Anosov regular energy surfaces and the closed elliptic orbits, forms a residual subset of $M$.
\end{maintheorem}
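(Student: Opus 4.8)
\textbf{Proof proposal for Theorem \ref{thm 3}.}
The plan is to deduce Theorem \ref{thm 3} from Theorem \ref{thm 1} by a standard Baire--category argument in $C^s(M,\Rr)$, the underlying dichotomy being ``Anosov energy level versus elliptic closed orbit''. Fix a countable basis $\{U_n\}_{n\in\Nn}$ for the topology of $M$, and for each $n$ introduce the two subsets of $C^s(M,\Rr)$
\[
\mathcal{A}_n=\{H:\ U_n\ \text{meets an Anosov regular energy surface of}\ H\},\qquad
\mathcal{E}_n=\{H:\ U_n\ \text{contains a closed elliptic orbit of}\ H\}.
\]
I would show that $\mathcal{A}_n\cup\mathcal{E}_n$ is open and dense in the $C^2$-topology, and then take the intersection over $n$.

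Openness is a robustness statement. The set $\mathcal{A}_n$ is open because the union of Anosov energies is $C^2$-open (structural stability, as recalled in the text) and a $C^2$-small perturbation of $H$ displaces the corresponding Anosov surface only $C^0$-slightly, so it keeps meeting the open set $U_n$. The set $\mathcal{E}_n$ is open because a closed elliptic orbit corresponds to a fixed point of the transverse Poincar\'e return map whose linearization is a $2\times2$ symplectic matrix with non-real eigenvalues on the unit circle; in particular $1$ is not an eigenvalue, so by the implicit function theorem the fixed point persists, $C^1$-continuously, under $C^2$-small perturbations of $H$ (the return map of a $C^2$ Hamiltonian being $C^1$ and depending continuously on $H$), its eigenvalues stay off the real axis, and the nearby closed orbit remains inside $U_n$.

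Density is where Theorem \ref{thm 1} enters. Given $H_0$ and $\varepsilon>0$: if $U_n$ already meets an Anosov regular energy surface of $H_0$ there is nothing to do. Otherwise, after possibly a preliminary arbitrarily small perturbation we may assume $H_0$ is not locally constant on $U_n$, so $H_0(U_n)$ is a nonempty open interval; by Sard's theorem its regular values are co-null, hence $U_n$ meets regular energy surfaces of $H_0$. Since $U_n$ meets no Anosov surface, none of these regular surfaces has energy in the open set of Anosov energies; and since $H_0(U_n)$ is open while the boundary of the Anosov energy set has empty interior, not all of them can have energy in its closure either. Therefore at least one regular energy surface meeting $U_n$ is far from Anosov, and Theorem \ref{thm 1} provides a $C^\infty$ Hamiltonian $\varepsilon$-close to $H_0$ with a closed elliptic orbit through $U_n$, i.e.\ an element of $\mathcal{E}_n$. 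Thus $\mathcal{A}_n\cup\mathcal{E}_n$ is dense.

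Let $\mathcal{R}_0\subset C^s(M,\Rr)$ be the residual set of Hamiltonians with only finitely many critical points, and put $\mathcal{R}=\mathcal{R}_0\cap\bigcap_{n\in\Nn}(\mathcal{A}_n\cup\mathcal{E}_n)$, a residual subset of $C^s(M,\Rr)$. For $H\in\mathcal{R}$ every nonempty basis set $U_n$ either meets the open set $\A(H)$ (the union of the Anosov regular energy surfaces) or contains a closed elliptic orbit, so $\A(H)\cup\EE(H)$ is dense in $M$, where $\EE(H)$ denotes the union of the closed elliptic orbits. Finally, since $\A(H)$ is open, $M\setminus\big(\A(H)\cup\overline{\EE(H)}\big)=\big(M\setminus\A(H)\big)\cap\big(M\setminus\overline{\EE(H)}\big)$ is the intersection of a closed set with an open (hence $F_\sigma$) set, so $\A(H)\cup\overline{\EE(H)}$ is a dense $G_\delta$, i.e.\ residual in $M$; this yields the conclusion. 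The only substantial input is Theorem \ref{thm 1}, everything else being soft; within the argument the point needing a little care is precisely the claim, in the density step, that $U_n$ missing the Anosov region forces it to meet a far-from-Anosov regular energy surface, so that Theorem \ref{thm 1} applies.
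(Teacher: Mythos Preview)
Your overall strategy is the same as the paper's: derive Theorem~\ref{thm 3} from Theorem~\ref{thm 1} by a Baire category argument. The organization differs, however. The paper works in the product space $M\times C^s(M,\Rr)$, builds a residual subset there as a nested intersection of open dense sets $\mathcal{O}(\mathcal{U}_{k-1},\epsilon_{k-1})$, and then invokes a slicing lemma (Bochi--Fayad) to obtain, for a residual set of $H$, a residual set of points $p$ with the Anosov/elliptic alternative. You instead fix a countable basis $\{U_n\}$ of $M$, run Baire directly in $C^s(M,\Rr)$ on the sets $\mathcal{A}_n\cup\mathcal{E}_n$, and recover the residual set in $M$ afterwards. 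Your route is slightly more elementary in that it avoids the product space and the external slicing result; the paper's route is a bit cleaner in that the ``far from Anosov'' hypothesis of Theorem~\ref{thm 1} is encoded once and for all as membership in $\mathcal{N}=\mathcal{M}\setminus\overline{\mathcal{A}}$.

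There is one genuine gap, precisely at the point you flag. In the density step you argue that some regular energy $e\in H_0(U_n)$ lies outside the closure of the Anosov energies \emph{of $H_0$}. But ``far from Anosov'' in Theorem~\ref{thm 1} (and in its proof, where the Bochi--Ma\~n\'e dichotomy from \cite{BJD} is applied to a nearby Hamiltonian) means that no regular energy surface of any $C^2$-nearby Hamiltonian close to $\mathcal{E}$ is Anosov---i.e.\ $(p,H_0)\notin\overline{\mathcal{A}}$ in the product topology. Your boundary-of-Anosov-energies argument only controls variation in $e$ with $H_0$ fixed, so it does not verify the hypothesis of Theorem~\ref{thm 1}. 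The fix is immediate and actually simplifies your paragraph: pick any regular energy surface $\mathcal{E}$ through $U_n$. Either $\mathcal{E}$ is far from Anosov in the product sense, and Theorem~\ref{thm 1} yields a nearby Hamiltonian in $\mathcal{E}_n$; or $\mathcal{E}$ is approximated by Anosov surfaces of nearby Hamiltonians, and then an arbitrarily small perturbation of $H_0$ already lies in $\mathcal{A}_n$ (the perturbed Anosov surface, being $C^0$-close to $\mathcal{E}$, still meets $U_n$). Either way $\mathcal{A}_n\cup\mathcal{E}_n$ is dense, and the Sard/boundary discussion can be dropped.

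A minor remark on your last step: what you actually prove (and what the paper's construction also yields, despite its phrasing) is that $\A(H)\cup\overline{\EE(H)}$ is a dense $G_\delta$ in $M$; passing from $\overline{\EE(H)}$ to $\EE(H)$ is not addressed by either argument, so your conclusion matches the paper's in substance.
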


At this stage it is relevant to discuss the recent result (in the same $4$-dimensional setting) that motivated the proof of the Hamiltonian version of Franks' lemma.
Th\'er\`ese Vivier showed in~\cite{V} that any robustly transitive regular energy surface of a $C^2$-Hamiltonian is Anosov. 
Recall that a regular energy surface is \emph{transitive} if it has a dense orbit, and it is \emph{robustly transitive} if the restriction of any sufficiently $C^2$-close Hamiltonian to a nearby regular energy surface is still transitive.
It is easy to see that our results also imply this theorem.
In fact, if a regular energy surface $\mathcal{E}$ of a $C^2$-Hamiltonian $H$ is far from Anosov, then by Theorem~\ref{thm 3} there exists a $C^2$-close $C^\infty$-Hamiltonian with an elliptic closed orbit on a nearby regular energy surface. 
%Now, using the Hamiltonian ``pasting lemma'' (Lemma~\ref{PL}), we create an invariant tube containing $\Gamma$ for a Hamiltonian arbitrarily $C^2$-close to $H$. 
This invalidates the chance of robust transitivity for $H$ according to a KAM-type criterium (see~\cite[Corollary 9]{V}).

%See the end of section~\ref{section:PL} for full details.

\medskip

In section~\ref{section:prelim} we present basic notions and results to be used later for the proofs of the theorems. Those appear in section~\ref{section:proofthm1} for Theorem~\ref{thm 1} and in section~\ref{section:proofthm3} for Theorem~\ref{thm 3}.

%%%%%%%%%%%%%%%%%%%%%%%%%%%%%%%%%%%%%%%%%%
\section{Preliminaries}\label{section:prelim}

\subsection{Notations and basic definitions}

We refer to \cite{BJD} for the basic notions in Hamiltonian dynamical systems and the notations used in this paper.
In particular, we recall that given a differentiable Hamiltonian $H\colon M\to\Rr$ we denote $X_H$ as the Hamiltonian vector field, $\varphi_H^t$ the corresponding Hamiltonian flow, and $\Phi_H^t$ the transversal linear Poincar\'e flow.
Notice that $\Phi_H^t$ is an automorphism of a two dimensional vector bundle $\cup_{x\in M}\mathcal{N}_x$. 

For a $\varphi^{t}_{H}$-invariant set $\Lambda\subset{M}$ and $m\in\Nn$ we say that a splitting of the bundle $\mathcal{N}_\Lambda=\mathcal{N}^{-}_\Lambda\oplus\mathcal{N}^{+}_\Lambda$ is an \emph{$m$-dominated splitting} for the transversal linear Poincar\'{e} flow if it is $\Phi^{t}_{H}$-invariant and continuous such that
\begin{equation}\label{dd}
\frac{\|\Phi^{m}_{H}(x)|\mathcal{N}^{-}_{x}\|}{\|\Phi^{m}_{H}(x)|\mathcal{N}^{+}_{x}\|}\leq{\frac{1}{2}},
\qquad
x\in\Lambda.
\end{equation}
We call $\mathcal{N}_\Lambda=\mathcal{N}^{-}_\Lambda\oplus\mathcal{N}^{+}_\Lambda$ a \emph{dominated splitting} if it is $m$-dominated for some $m\in\Nn$.

%Again, as showed in \cite{BJD} for a $4$-dimensional compact symplectic manifold $M$, given a Hamiltonian $H\in C^3(M,\Rr)$ and a regular energy surface $\EE$, if $\Lambda\subset\EE$ has a dominated splitting for $\Phi_H^t$ then the natural measure induced in $\EE$ of $\Lambda$ is zero or $\EE$ is Anosov.

A metric on the manifold $M$ can be derived in the usual way through the Darboux charts. It will be denoted by $\operatorname{dist}$. Hence we define the open balls $B(p,r)$ of the points $x$ verifying $\operatorname{dist}(x,p)<r$.

%%%%%%%%%%%%%%%%%%%%%%%%%%%%%%%%%%%%%%%%%%
\subsection{Elliptic, parabolic and hyperbolic closed orbits}

Let $\Gamma\subset M$ be a closed orbit of least period $\tau$. The characteristic multipliers of $\Gamma$ are the eigenvalues of $\Phi_H^{\tau}(p)$, which are independent of the point $p\in\Gamma$. 
We say that $\Gamma$ is
\begin{itemize}
\item
{\it elliptic} iff the two characteristic multipliers are simple, non-real and of modulus $1$;
\item
{\it parabolic} iff the characteristic multipliers are real and of modulus $1$;
\item
{\it hyperbolic} iff the characteristic multipliers have modulus different from $1$. 
\end{itemize}

It is clear that under small perturbations, elliptic and hyperbolic orbits are stable whilst parabolic ones are unstable.

We refer to a point in a closed orbit as periodic. Periodic points are classified in the same way as the respective closed orbit.

%%%%%%%%%%%%%%%%%%%
\subsection{Perturbation lemmas}\label{section:PL}

We include here two perturbation results available in the literature that will be used in the remaining sections.
The first is a version of Pugh's closing lemma, stating that the orbit of a non-wandering point can be approximated for a very long time by a closed orbit of a nearby Hamiltonian.

\begin{theorem}[Closing lemma for Hamiltonians~\cite{PR}]\label{closing lemma}
Let $H\in C^s(M,\Rr)$, $2\leq s\leq\infty$, a non-wandering point $x\in M$ and $\epsilon,r,\tau>0$.
Then, we can find $\widetilde H\in C^2(M,\Rr)$, a closed orbit $\Gamma$ of $\widetilde H$ with least period $\ell$, $p\in\Gamma$ and a map $g\colon[0,\tau]\to[0,\ell]$ close to the identity such that:  
\begin{itemize}
\item 
$\|\widetilde H-H\|_{C^2}<\epsilon$,
\item 
$\operatorname{dist}\left( \varphi_{H}^t(x), \varphi_{\widetilde H}^{g(t)}(p)\right) <r $, $0\le t\le \tau$, and
\item 
$H=\widetilde H$ on $M\setminus A$, where $A=\bigcup_{0\le t\le \ell}\big(B(p,r)\cap B(\varphi_{\widetilde H}^t(p),r)\big)$.
\end{itemize}
\end{theorem}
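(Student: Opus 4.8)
The plan is to follow Pugh's closing-lemma scheme in its Hamiltonian (symplectic) incarnation, as carried out in~\cite{PR}. Fix $H$, the non-wandering point $x$, and $\epsilon,r,\tau>0$. Working in a Darboux chart, identify a neighbourhood of $x$ with an open subset of $\Rr^4$ carrying the standard symplectic form, so that every perturbation constructed below is genuinely Hamiltonian; choose a small ball $V=B(x,\rho)$ with $\rho$ to be fixed later in terms of $\epsilon$, $r$, $\tau$ and the modulus of continuity of $t\mapsto\varphi_H^t$ on $[0,\tau]$. Since $x$ is non-wandering there are points $z\in V$ arbitrarily close to $x$ and times $T>\tau$ with $\varphi_H^T(z)\in V$; shrinking $\rho$ we may moreover assume $\operatorname{dist}(\varphi_H^t(z),\varphi_H^t(x))<r/3$ for $0\le t\le\tau$. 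Thus the $H$-orbit arc $\{\varphi_H^t(z):0\le t\le T\}$ is a long, nearly closed loop shadowing $\varphi_H^t(x)$ on $[0,\tau]$, and it remains to genuinely close it up by a perturbation of controlled size supported near $x$.

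The core step is to produce $\widetilde H\in C^2(M,\Rr)$ with $\|\widetilde H-H\|_{C^2}<\epsilon$, equal to $H$ outside a thin flow-box tube around the piece of the arc lying in $V$, whose flow has a true closed orbit $\Gamma\ni p$ with $p$ close to $z$ and least period $\ell$ close to $T$. Inside the tube one writes $\widetilde H=H+h$ with $h$ a bump function adapted to Darboux coordinates transverse to $X_H$, so that $X_h$ is a $C^1$-small ``steering'' field that redirects the re-entering strand of the orbit onto the outgoing one. The decisive obstacle --- indeed the whole content of the closing lemma --- is the size estimate: a single box of transverse size $\rho$ is crossed in bounded time, so closing the $O(\rho)$ gap in one push forces $\|X_h\|_{C^1}$, hence $\|h\|_{C^2}$, to be bounded below independently of $\rho$. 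The resolution I would adapt is Pugh's: exploit recurrence to make the orbit return to a neighbourhood of $x$ a large number $N$ of times, with the returns sufficiently separated in time that one may install $N$ boxes with pairwise disjoint supports, each accomplishing only a $1/N$ fraction of the reconnection; each steering field then has $C^1$-size of order $1/N\to 0$, and disjointness of supports keeps the cumulative perturbation $C^2$-small. In the symplectic setting one must additionally verify that this accumulated steering is the Hamiltonian vector field of a single $C^2$-small function supported where prescribed --- this is exactly the ``plug'' construction of Pugh and Robinson, and the second point requiring care.

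Finally, the remaining assertions follow by bookkeeping. By construction the support of $h$ lies in a small neighbourhood of the points where $\Gamma$ returns near $p$, hence --- for $\rho$ and the tube radius small enough --- inside $A=\bigcup_{0\le t\le\ell}\big(B(p,r)\cap B(\varphi_{\widetilde H}^t(p),r)\big)$, which gives $\widetilde H=H$ on $M\setminus A$. Since $\|\widetilde H-H\|_{C^2}<\epsilon$, the field $X_{\widetilde H}$ is $C^1$-close to $X_H$, so $\varphi_{\widetilde H}^s$ is uniformly close to $\varphi_H^s$ on $[0,\tau]$, even along the part of the arc meeting the tube; combined with $p$ close to $z$, $z$ close to $x$, and a reparametrization $g\colon[0,\tau]\to[0,\ell]$ close to the identity (needed because the Hamiltonian speed is slightly altered inside the tube), this yields $\operatorname{dist}(\varphi_H^t(x),\varphi_{\widetilde H}^{g(t)}(p))<r$ for $0\le t\le\tau$, and since $T>\tau$ the curve $t\mapsto\varphi_{\widetilde H}^{g(t)}(p)$ is a genuine sub-arc of the closed orbit $\Gamma$. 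Tracking the order in which $\rho$, the tube radius and $N$ are chosen in terms of $\epsilon,r,\tau$ completes the argument.
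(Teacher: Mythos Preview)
The paper does not prove this statement at all: Theorem~\ref{closing lemma} is quoted from Pugh--Robinson~\cite{PR} as a known result and used as a black box in section~\ref{section:proofthm1}. There is therefore no ``paper's own proof'' to compare against.

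Your sketch is a faithful high-level outline of the Pugh--Robinson strategy (recurrence, many disjoint push boxes each contributing an $O(1/N)$ displacement, and the symplectic plug construction to realise the cumulative steering as a Hamiltonian perturbation). As an outline it is correct in spirit, but you should be aware that the genuine difficulties --- the Fundamental Lemma controlling how the successive returns nest, and the construction of Hamiltonian lifts with the stated $C^2$ bounds and support --- are precisely the parts you defer to~\cite{PR}; what you have written is closer to a summary of that paper's architecture than an independent proof. Since the present paper itself only cites the result, that is entirely appropriate here.
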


The next theorem is a version of Franks' lemma. Roughly, it says that we can realize a Hamiltonian corresponding to a given perturbation of the transversal linear Poincar\'e flow. It is proved for $2d$-dimensional manifolds with $d\geq2$.

\begin{theorem}[Franks' lemma for Hamiltonians~\cite{V}]\label{Vivier}
Let $H\in C^s(M,\Rr)$, $2\leq s\leq\infty$, $\epsilon,\tau>0$ and $x\in M$.
There exists $\delta>0$ such that for any flowbox $V$ of an injective arc of orbit $\Sigma=\varphi_{H}^{[0,t]}(x)$, $t\geq \tau$, and a $\delta$-perturbation $F$ of $\Phi_{H}^t(x)$, there is $\widetilde H\in C^{\max\{2,s-1\}}(M,\Rr)$ satisfying:
\begin{itemize}
\item
$\|\widetilde H-H\|_{C^2}<\epsilon$,
\item
$\Phi_{\widetilde H}^t(x)=F$,
\item
$H=\widetilde H$ on $\Sigma\cup (M\setminus V)$.
\end{itemize}
\end{theorem}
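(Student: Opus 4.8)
The plan is to reduce the statement to a control problem for a linear Hamiltonian system. First I would straighten $H$ and its flow inside the flowbox so that the transversal linear Poincar\'e flow along the arc becomes the identity of a fixed symplectic vector space, and then realize the prescribed symplectic matrix $F$ by adding to $H$ a Hamiltonian that is purely quadratic in the transversal variables, is supported in $V$, and vanishes to second order along the arc. Concretely, fix the injective arc $\Sigma=\varphi_H^{[0,t]}(x)$, $t\ge\tau$, and a flowbox $V\supset\Sigma$. Near a regular orbit one can choose symplectic coordinates $(\theta,I,z)\in\Rr\times\Rr\times\Rr^{2(d-1)}$ on $V$ in which $\omega=d\theta\wedge dI+\omega_0$, with $\omega_0$ the standard form on $\Rr^{2(d-1)}$, such that $H=I$, the arc $\Sigma$ is the segment $\{I=e,\ z=0,\ 0\le\theta\le t\}$, and the transversal sections at its endpoints are the corresponding $z$-planes. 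Then $X_H=\partial_\theta$, the flow merely translates $\theta$, so $D\varphi_H^t$ is the identity and, read in the frame $\{\partial_z\}$, $\Phi_H^t(x)$ is the identity of $(\Rr^{2(d-1)},\omega_0)$. Thus, after a fixed identification of the endpoint normal fibres with $(\Rr^{2(d-1)},\omega_0)$, the datum $F$ is a symplectic matrix at distance $O(\delta)$ from the identity.

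Next I would look for $\widetilde H=H+h$ with
\[
h(\theta,I,z)=\xi(I)\,\eta(\theta)\,\psi(z)\,\tfrac12\,z^{\top}S_0\,z ,
\]
where $\eta\ge0$ is a smooth bump supported in the fixed subinterval $(\tau/3,2\tau/3)\subset(0,t)$ with $\int\eta\,d\theta=1$, $\xi$ is a cutoff in $I$ equal to $1$ near $e$, $\psi$ is a transversal cutoff equal to $1$ near $z=0$ and supported in the section of $V$, and $S_0$ is a constant symmetric matrix to be chosen. Being quadratic in $z$, the function $h$ vanishes to second order along $\Sigma$, so $\widetilde H=H$ and $X_{\widetilde H}=X_H$ there; hence $\Sigma$ remains an orbit with the same parametrization and $\widetilde H=H$ on $\Sigma\cup(M\setminus V)$, as required. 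Computing $X_{\widetilde H}$ on a neighbourhood of $\Sigma$ where $\xi\equiv\psi\equiv1$, one finds that the variational equation on the normal fibres along $\Sigma$ is the linear Hamiltonian system $\dot z=\eta(\theta)\,JS_0\,z$, $J$ the standard complex structure; since the matrices $\eta(\theta)JS_0$ commute for distinct $\theta$, its time-$t$ map is $\Phi_{\widetilde H}^t(x)=\exp(JS_0)$.

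It remains to choose $S_0$ and to bound $\|h\|_{C^2}$. The exponential is a local diffeomorphism from the tangent space to $\mathrm{Sp}(2(d-1),\Rr)$ at the identity — which is $J\cdot\{\text{symmetric matrices}\}$ — onto a neighbourhood of the identity; hence the symplectic matrix $F$ of the first step can be written $F=\exp(JS_0)$ with $S_0$ symmetric and $\|S_0\|=O(\delta)$, and then $\Phi_{\widetilde H}^t(x)=F$. As for the norm, the dominant second derivative of $h$ is $\partial_z^2 h$, which is $O(\|S_0\|)$ \emph{uniformly in $V$}: the blow-up of $\partial_z^2\psi$ at the transversal scale of $V$ is exactly compensated by the quadratic factor $\tfrac12 z^{\top}S_0 z$. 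Every other second derivative of $h$ (one that hits $\xi$ or $\eta$, or leaves at most one factor of $z$) carries extra powers of that transversal scale and can be made as small as we wish by shrinking the transversal support of $h$ inside $V$. Consequently a single $\delta$, depending only on $H,x,\epsilon,\tau$, forces $\|\widetilde H-H\|_{C^2}<\epsilon$; and a separate accounting of regularity — straightening the flow when $s\ge3$, and working instead in fixed smooth Darboux coordinates together with a variation-of-parameters argument when $s=2$ — yields $\widetilde H\in C^{\max\{2,s-1\}}(M,\Rr)$.

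The step I expect to be the main obstacle is the uniformity of the $C^2$-estimate over all flowboxes: $\delta$ must be fixed before $V$ is revealed, yet $V$ may be arbitrarily thin. This is precisely what forces the perturbation to be genuinely quadratic in the transversal variable (so that the cutoff derivatives cancel) and to be localized in a sufficiently small sub-flowbox of $V$, and it is where the lower bound $t\ge\tau$ enters, supplying a fixed-length window for $\eta$. A secondary technical nuisance is the derivative bookkeeping in the borderline case $s=2$, where straightening the flow is unavailable and one must argue directly in smooth Darboux coordinates.
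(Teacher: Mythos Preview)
The paper does not prove this statement: Theorem~\ref{Vivier} is quoted from Vivier~\cite{V} in section~\ref{section:PL} as a black-box perturbation lemma, alongside the closing lemma, and is used without proof in the arguments for Theorems~\ref{thm 1} and~\ref{thm 3}. There is therefore no proof in the paper to compare your proposal against.

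That said, your sketch follows the standard route and is essentially correct: straighten $H$ in symplectic flowbox coordinates so that $H=I$ and the transversal flow along $\Sigma$ is trivial, add a Hamiltonian that is purely quadratic in the transversal variable and supported in a fixed $\theta$-window, and realize the prescribed symplectic correction via the exponential map on $\mathfrak{sp}(2(d-1),\Rr)$. You have correctly isolated the crux of the matter, namely that $\delta$ must be chosen uniformly over all (possibly very thin) flowboxes $V$, and that the genuinely quadratic dependence on $z$ is exactly what makes the transversal cutoff derivatives cancel against the factor $z^\top S_0 z$; the lower bound $t\ge\tau$ is what supplies a fixed-length window for $\eta$. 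One imprecision worth flagging: you assert that the flowbox chart makes $\Phi_H^t(x)$ equal to the identity, but since $t$ is unbounded the chart cannot be expected to cover the whole arc with uniform bounds. The fix is already implicit in your construction: you only perturb in $\theta\in(\tau/3,2\tau/3)$, so it suffices to straighten $H$ on a fixed neighbourhood of $\varphi_H^{[0,\tau]}(x)$, write $F=\Phi_H^t(x)\cdot G$ with $G\colon\mathcal N_x\to\mathcal N_x$ $O(\delta)$-close to the identity, and realize a bounded conjugate of $G$ as $\exp(JS_0)$ inside that window. With that adjustment the outline goes through.
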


%%%%%%%%%%%%%%%%%%%%%%%%%%%%%%%%%%%%%%%%%%
\section{Proof of Theorem \ref{thm 1}}\label{section:proofthm1}

By Robinson's version of the Kupka-Smale theorem~\cite{R}, a $C^2$-generic Hamiltonian has all closed orbits of hyperbolic or elliptic type. Moreover, it follows from the closing lemma that for a $C^2$-generic Hamiltonian $H$, the set of closed orbits $\Per(H)$ is dense in the nonwandering set of $H$. 
Hence, by Poincar\'e recurrence, $\Per(H)$ is dense in $M$.

%In addition, $U\cap\EE\not=\emptyset$ where $\EE$ is a non-Anosov regular energy surface. 

%%%%%%%%%%
\subsection{Hyperbolic orbits with domination}

Let us recall an elementary consequence of the
persistence of dominated splittings; see~\cite{BDV} for the full details.

\begin{lemma}\label{PDD}
Consider $H\in C^s(M,\Rr)$, $2\leq s\leq\infty$, and a $\varphi_H^t$-invariant $\Lambda\subset M$ with $m$-dominated splitting for $\Phi^t_H$.
Then, there exists $\delta>0$ and a neighborhood $V$ of $\Lambda$ such that the set
$$
%\Lambda_{H}(V):=
\bigcap_{t\in\mathbb{R}}\varphi_{H}^t(V)
$$
has an $m'$-dominated splitting for any Hamiltonian $\widetilde H$ $\delta$-$C^2$-close to $H$ for $m'>m$.
\end{lemma}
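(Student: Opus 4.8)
The plan is to deduce this from the standard persistence of dominated splittings for cocycles, applied to the transversal linear Poincar\'e flow $\Phi^t_H$ viewed as a linear flow over the compact invariant set $\Lambda$. Recall that $m$-domination of $\mathcal{N}_\Lambda=\mathcal{N}^-_\Lambda\oplus\mathcal{N}^+_\Lambda$, as stated in~\eqref{dd}, is an open condition: it is controlled by the single time-$m$ map $\Phi^m_H$, and the inequality is strict ($1/2<1$). Since $\Lambda$ is compact and the splitting is continuous, I would first upgrade the pointwise estimate to a uniform cone condition: there are transverse cone fields $\mathcal{C}^+$, $\mathcal{C}^-$ on $\mathcal{N}$ over a neighborhood, invariant under $\Phi^m_H$ and $\Phi^{-m}_H$ respectively, with strict contraction/expansion of cones, such that $\mathcal{N}^+_x=\bigcap_{n\geq0}\Phi^{nm}_H(\mathcal{C}^+)$ and $\mathcal{N}^-_x=\bigcap_{n\geq0}\Phi^{-nm}_H(\mathcal{C}^-)$ on $\Lambda$. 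This is the usual cone-field reformulation of domination.

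Next I would use continuity of the maps $(H,x,t)\mapsto\Phi^t_H(x)$ in the $C^2$-topology of $H$ and in $x$: the transversal linear Poincar\'e flow depends continuously on the Hamiltonian (through $X_H$ and $D\varphi^t_H$), so the cone fields remain invariant with essentially the same contraction rates if we (i) shrink to a small neighborhood $V$ of $\Lambda$, (ii) pass to a slightly larger iterate $m'>m$ to absorb the loss of contraction (here one uses that $k$-domination implies $k'$-domination for any multiple, and more generally the ratio in~\eqref{dd} decays along orbits), and (iii) restrict $\widetilde H$ to be $\delta$-$C^2$-close to $H$. Concretely: choose $V$ and $\delta$ so that for every $\widetilde H$ that is $\delta$-close and every $y\in\bigcap_{t\in\R}\varphi^t_{\widetilde H}(V)$, the cones $\mathcal{C}^\pm(y)$ are strictly mapped inside themselves by $\Phi^{m'}_{\widetilde H}(y)$, resp.\ $\Phi^{-m'}_{\widetilde H}(y)$, with contraction factor still $\leq 1/2$. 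Defining $\mathcal{N}^+_y=\bigcap_{n\geq 0}\Phi^{nm'}_{\widetilde H}(\mathcal{C}^+)$ and $\mathcal{N}^-_y$ analogously yields a $\Phi^t_{\widetilde H}$-invariant continuous splitting over the invariant set $\bigcap_t\varphi^t_{\widetilde H}(V)$, which is $m'$-dominated by construction.

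The main technical point to be careful about—though it is routine and fully worked out in~\cite{BDV}—is that $\Phi^t_H$ is a cocycle over a \emph{flow} rather than a diffeomorphism, and it acts on the normal bundle $\mathcal{N}=\cup_x\mathcal{N}_x$ rather than on $TM$; one must check that all the estimates are uniform in the continuous-time parameter and that the normalization of $\mathcal{N}$ (as the quotient $T_xM/(\R X_H(x)\oplus\R\nabla H(x))$, or a chosen complement) varies continuously under perturbation of $H$, so that the cone fields transplant correctly. Once the cone-field picture is set up this is immediate. I would therefore keep the proof short: state the cone characterization of $m$-domination, invoke continuity of $(H,x,t)\mapsto\Phi^t_H(x)$ to get robustness of the cone conditions on a neighborhood for $C^2$-nearby Hamiltonians at the slightly larger time $m'$, and conclude that the maximal invariant set in $V$ carries an $m'$-dominated splitting, referring to~\cite{BDV} for the details.
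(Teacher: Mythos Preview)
Your proposal is correct and matches the paper's approach: the paper does not give a proof at all but simply states the lemma as ``an elementary consequence of the persistence of dominated splittings'' and refers to~\cite{BDV} for the full details. Your cone-field sketch is precisely the standard argument one finds there, and you even conclude by proposing to refer to~\cite{BDV}, so you are in complete agreement with the paper (and indeed supply more detail than it does).
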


%%%%%%%%%%
\subsection{Absence of domination}

The following propositions allow us to use the lack of hyperbolic behaviour of the transversal linear Poincar\'e flow to produce elliptic closed orbits by small perturbations.

\begin{proposition}\label{propA}
Let $H\in C^s(M,\Rr)$, $2\leq s\leq\infty$, and  $\epsilon>0$.
There is $\theta>0$ such that for any closed hyperbolic orbit $\Gamma$ with least period $\tau>1$ and $\measuredangle(\mathcal{N}_{q}^{u},\mathcal{N}_{q}^{s})<\theta$, $q\in\Gamma$, there is $\widetilde H\in C^\infty(M,\Rr)$ $\epsilon$-$C^2$-close to $H$ for which $\Gamma$ is elliptic with least period $\tau$.
\end{proposition}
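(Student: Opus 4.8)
The plan is to reduce the statement to a question about the single symplectic linear map $P:=\Phi_{H}^{\tau}(q)$ on the two-dimensional fibre $\mathcal{N}_{q}$ over a point $q\in\Gamma$, and to exploit the hypothesis in the following form: a small angle between $\mathcal{N}_{q}^{u}$ and $\mathcal{N}_{q}^{s}$ forces $P$ to carry a large shear, so that post-composing $P$ with an \emph{arbitrarily} small rotation of $\mathcal{N}_{q}$ turns it into an elliptic map. First I would fix $\delta=\delta(H,\epsilon)>0$ given by the Hamiltonian Franks' lemma (Theorem~\ref{Vivier}), applying that result with its time parameter equal to $1$ so that it is available for every closed orbit of least period $\tau>1$. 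I intend to use it in the form: for such a $\Gamma$ and any $q\in\Gamma$, one can realize $\Phi_{\widetilde H}^{\tau}(q)=E\,P$ for any symplectic $E$ with $\|E-\mathrm{Id}\|<\delta$, by some $\widetilde H$ that is $\epsilon$-$C^{2}$-close to $H$ and coincides with $H$ on $\Gamma$; then $\Gamma$ persists as a closed orbit of $\widetilde H$ of least period $\tau$, with characteristic multipliers the eigenvalues of $EP$. Passing from the literal statement of Theorem~\ref{Vivier} (an injective sub-arc, a flowbox) to this ``near-identity factor at the basepoint'' form is routine: take the arc $\varphi_{H}^{[0,\tau-\eta]}(q)$ with $\eta$ small and conjugate $E$ by the flow over the complementary short arc, which is $C^{1}$-close to the identity and hence barely changes its size.

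Then I would do the linear algebra. In an orthonormal basis of $\mathcal{N}_{q}$ whose first vector spans $\mathcal{N}_{q}^{u}$, hyperbolicity gives $P=\left(\begin{smallmatrix}\lambda & c\\ 0 & \lambda^{-1}\end{smallmatrix}\right)$ with $\lambda$ real, $|\lambda|>1$, and $|c|=|\lambda-\lambda^{-1}|\cot\theta_{q}\geq|\lambda-\lambda^{-1}|\cot\theta$, where $\theta_{q}=\measuredangle(\mathcal{N}_{q}^{u},\mathcal{N}_{q}^{s})<\theta$. For a rotation $R_{\alpha}$ one has the elementary identity $\operatorname{tr}(R_{\alpha}P)=(\lambda+\lambda^{-1})\cos\alpha+c\sin\alpha$, so, choosing the sense of rotation according to the sign of $c$ and
\[
\alpha_{0}=\arctan\frac{|\lambda+\lambda^{-1}|}{|c|}\in\Bigl(0,\frac{\pi}{2}\Bigr),
\]
we get $\operatorname{tr}(R_{\alpha_{0}}P)=0$, i.e.\ $R_{\alpha_{0}}P$ has eigenvalues $\pm i$ and is elliptic. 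Since $|\alpha_{0}|\leq|\lambda+\lambda^{-1}|/|c|\leq\tan\theta\cdot\tfrac{\lambda^{2}+1}{\lambda^{2}-1}$, as long as $|\lambda|$ stays away from $1$, say $|\lambda|\geq 1+\rho$, the factor $\tfrac{\lambda^{2}+1}{\lambda^{2}-1}$ is bounded by a constant $K(\rho)$, and it suffices to choose $\theta$ with $K(\rho)\tan\theta<\delta$: then $\|R_{\alpha_{0}}-\mathrm{Id}\|\leq|\alpha_{0}|<\delta$, and taking $E=R_{\alpha_{0}}$ in Theorem~\ref{Vivier} yields the desired $\widetilde H$ with $\Gamma$ elliptic of least period $\tau$.

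For the complementary range $|\lambda|<1+\rho$ the orbit is already nearly parabolic, $|\operatorname{tr}P|$ lying within $\rho^{2}$ of $2$; there I would argue directly. If $\|P\|$ is still large the same rotation works (now $|\alpha_{0}|$ is small because $|c|$ is large), while if $\|P\|$ is bounded I would perturb $P$ by pushing its trace just past $\pm2$ and restoring $\det=1$ with a small off-diagonal entry --- necessarily a genuinely non-triangular perturbation, since a triangular symplectic $2\times2$ matrix has $|\operatorname{tr}|\geq2$ --- obtaining an elliptic $F$ with $\|F-P\|<\delta$. One then checks that all the small parameters involved can be kept below $\delta$ (more precisely below $\delta/\|P\|$ for the multiplicative factor $FP^{-1}$) by shrinking $\rho$ and $\theta$; choosing $\rho$ of the order of $\delta$ and $\theta$ of the order of $\delta^{2}$ makes the two regimes compatible and fixes the uniform $\theta=\theta(H,\epsilon)$.

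The step I expect to be the main obstacle is precisely this uniformity. The rotation needed to drag $\operatorname{tr}P$ into the elliptic window $(-2,2)$ has size comparable to $\tan\theta\cdot\tfrac{\lambda^{2}+1}{\lambda^{2}-1}$, which degenerates as $|\lambda|\to1$, so no single threshold $\theta$ lets the one-rotation trick handle every hyperbolic orbit; the near-parabolic orbits genuinely require the separate (but elementary) treatment above, and keeping the choices of $\rho$ and $\theta$ mutually consistent is the only delicate bookkeeping. A subsidiary point, already accounted for, is that $\|P\|$ is large exactly when the angle is small, so the perturbation must enter multiplicatively, as a near-identity factor --- which is what the Hamiltonian Franks' lemma supplies --- and not as an additive perturbation of $P$ itself.
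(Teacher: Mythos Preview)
Your reduction to the linear Poincar\'e map and the use of Theorem~\ref{Vivier} match the paper, and your argument is essentially correct, but you create the very obstacle you flag. By aiming for $\operatorname{tr}(R_{\alpha}P)=0$ you force $|\alpha_0|\approx\tan\theta\cdot\tfrac{\lambda^2+1}{\lambda^2-1}$, which blows up as $|\lambda|\to1$ and drives your near-parabolic case-split (whose bookkeeping you leave somewhat loose, though it can be completed). The paper avoids this entirely by rotating by exactly the angle $\theta_q=\measuredangle(\mathcal{N}_q^u,\mathcal{N}_q^s)$, so that $R_{\theta_q}\mathcal{N}_q^u=\mathcal{N}_q^s$. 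In your own notation, since $c\sin\theta_q=(\lambda^{-1}-\lambda)\cos\theta_q$, this gives
\[
\operatorname{tr}(R_{\theta_q}P)=(\lambda+\lambda^{-1})\cos\theta_q+c\sin\theta_q=2\lambda^{-1}\cos\theta_q,
\]
whose modulus is strictly below $2$ for \emph{every} hyperbolic $|\lambda|>1$; this is precisely \cite[Lemma~3.5]{BD}. Thus one simply takes $\theta<\delta$ so that $\|R_{\theta_q}-\mathrm{Id}\|<\delta$ whenever $\theta_q<\theta$, and no splitting into regimes is needed. What your approach buys is an explicit target (eigenvalues $\pm i$), at the cost of non-uniformity near parabolic; what the paper's choice buys is a one-line uniform estimate with the rotation angle handed to you by the hypothesis itself.
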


\begin{proof}
We start by choosing a $C^\infty$ Hamiltonian $H_1$ $C^2$-close to $H$ having a closed orbit nearby $\Gamma$. If $\Gamma$ is elliptic, the proposition is proved. So we assume it is hyperbolic.

In a thin flowbox about $\Gamma$ use local symplectic flowbox coordinates (see~\cite[Theorem 4.1]{BJD}) and the perturbation described in~\cite[Lemma 4.2]{BJD} to interchange $\NN_q^u$ and $\NN_q^s$.
Indeed, we find $\alpha_0$ so that for any $0<\alpha<\alpha_0$, 
$$
R_\theta\NN_q^u=\NN_{\varphi_{H_1}^1(q)}^s,
$$
where $R_\alpha\in \operatorname{SO}(2,\Rr)$ rotates by $\alpha$.
By~\cite[Lemma 3.5]{BD} $\Phi_{H_1}^t(q)R_\theta$ is elliptic.
Then, theorem~\ref{Vivier} guarantees the existence of $\widetilde H$ as claimed.
\end{proof}

\begin{proposition}\label{propB}
Let  $H\in C^s(M,\Rr)$, $2\leq s\leq\infty$, and $\epsilon,\theta>0$. 
There exist $m,T\in\mathbb{N}$ ($T\gg m$) such that, if a hyperbolic closed orbit $\Gamma$ with least period $\tau>T$ satisfies:
\begin{enumerate}
\item 
$\measuredangle(\mathcal{N}_{q}^{u},\mathcal{N}_{q}^{s})\geq \theta$ for all $q\in\Gamma$, and 
\item 
$\Gamma$ has no $m$-dominated splitting,
\end{enumerate}
then we can find $\widetilde H\in C^\infty(M,\Rr)$  $\epsilon$-$C^2$-close to $H$ for which $\Gamma$ is elliptic with least period $\tau$.
\end{proposition}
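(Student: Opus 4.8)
The idea is to exploit the lack of an $m$-dominated splitting along $\Gamma$ to rotate the unstable direction of the transversal linear Poincaré flow onto the stable one by a product of small rotations distributed along the orbit, and then realize this perturbation via Vivier's lemma. The key linear-algebra input is the characterization (as used in Bessa--Duarte~\cite{BD}) that if a long hyperbolic periodic cocycle has no $m$-dominated splitting and its invariant directions are uniformly $\theta$-separated, then one can insert rotations of size bounded by some $\beta=\beta(\epsilon,\theta)$ at a definite fraction of the period so that the new composed map becomes elliptic. So first I would fix $\epsilon,\theta>0$ and use Theorem~\ref{Vivier} to obtain the $\delta>0$ controlling admissible perturbations of $\Phi_H^t(q)$; then choose $\beta>0$ (rotation size) small in terms of $\delta$, and then $m$ large and $T\gg m$ in terms of $\beta$ using the quantitative non-domination estimate.

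\textbf{Main steps.} (1) As in Proposition~\ref{propA}, first replace $H$ by a $C^\infty$ Hamiltonian $H_1$ that is $C^2$-close to $H$ and has a closed orbit near $\Gamma$ of the same period; if it is elliptic we are done, so assume it is hyperbolic, and note the angle and non-domination hypotheses persist (shrinking $\theta$, enlarging $m,T$ slightly). (2) Pass to the transversal linear Poincaré flow along $\Gamma$ and discretize: cover $\Gamma$ by a chain of flowboxes corresponding to time-$1$ (or time-$m$) pieces $\varphi_{H_1}^{[k,k+1]}(q)$, so that $\Phi_{H_1}^\tau(q)$ is written as a product of $\lfloor\tau\rfloor$ bounded $2\times 2$ matrices. (3) Invoke the non-domination mechanism: because $\Gamma$ has no $m$-dominated splitting, along the orbit there are infinitely many ``times of non-domination'' where the ratio in~\eqref{dd} exceeds $1/2$; a pigeonhole/growth argument over a period of length $>T$ produces a definite number of places where a rotation by an angle $\le\beta$ composed into the cocycle turns a hyperbolic block into one that is either elliptic or has its expanding direction rotated by a definite amount towards the contracting one. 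Iterating such insertions (finitely many, each of size $\le\beta$, hence the total perturbation at each flowbox stays $\delta$-small because they are spread out), one arranges that the new transversal cocycle $F$ over one period has non-real modulus-one eigenvalues, i.e.\ is elliptic. This is exactly the content one extracts from~\cite[Section 3]{BD}, adapted from the divergence-free $3$-flow setting to the Hamiltonian transversal flow. (4) Apply Theorem~\ref{Vivier} with this $F$ (noting $\|F-\Phi_{H_1}^\tau(q)\|<\delta$ and $\tau>T>\tau_0$) to get $\widetilde H\in C^\infty(M,\Rr)$, $\epsilon$-$C^2$-close to $H_1$ (hence to $H$), with $\Phi_{\widetilde H}^\tau(q)=F$ and $\widetilde H=H_1$ on $\Sigma\cup(M\setminus V)$; in particular $\Gamma$ remains a closed orbit of $\widetilde H$ with least period $\tau$ and is now elliptic.

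\textbf{The main obstacle.} The delicate point is step~(3): quantifying how the absence of an $m$-dominated splitting, \emph{together} with the uniform lower bound $\theta$ on the angle between $\NN^u$ and $\NN^s$, forces the existence of rotations of a controlled size $\beta$ — small enough to respect the Franks-lemma budget $\delta$ — whose composition into the hyperbolic cocycle yields ellipticity, while ensuring the required insertions occur at sufficiently separated times along the orbit so that each individual flowbox perturbation remains $\delta$-small. One must track how the constants $m$ and $T$ depend on $\beta$ (and hence on $\epsilon,\theta$): roughly, lack of $m$-domination for large $m$ gives many ``bad'' blocks per period once $\tau>T$, and spreading $O(1)$ rotations of size $\beta$ among $>T$ flowboxes keeps each perturbation of order $\beta\ll\delta$. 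Making this bookkeeping precise — essentially transcribing and adapting the cocycle perturbation estimates of~\cite[Lemma 3.5 and surrounding results]{BD} to the two-dimensional transversal linear Poincaré flow — is where the real work lies; the rest is assembling the already-cited perturbation theorems.
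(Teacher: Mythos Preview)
Your high-level plan (invoke the non-domination mechanism from~\cite{BD}, realize the resulting perturbation of the transversal cocycle via Theorem~\ref{Vivier}) is right, and matches the paper's approach. But the concrete mechanism you describe in step~(3) is not the one that works, and step~(4) contains an error.

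First, ``$\Gamma$ has no $m$-dominated splitting'' only says that the inequality~\eqref{dd} fails for \emph{some} $q\in\Gamma$; it does not give you many, let alone well-separated, ``times of non-domination'' along the orbit. So your pigeonhole scheme of inserting rotations at many spread-out bad blocks has no guaranteed input. What~\cite{BD} actually does (Lemmas~3.10--3.12, the ones the paper cites) is a three-stage argument: (i)~at the single point $q$ where $m$-domination fails, one $\epsilon/3$-small perturbation sends $\mathcal{N}^u_q$ onto $\mathcal{N}^s_{\varphi^m(q)}$ over time $m$; (ii)~because the expanding direction is now fed into the contracting one, the full-period transversal map has norm bounded by a constant $K=K(\theta,m)$ \emph{independent of $\tau$}; (iii)~with $\|\Phi^\tau\|\le K$ fixed, one spreads $\tau$ many \emph{diagonal} (shrink along $\mathcal{N}^u$, stretch along $\mathcal{N}^s$) $\epsilon/3$-small perturbations along time-one flowboxes to bring the norm down to $1$, which forces ellipticity. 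The role of $T$ is precisely to make $\tau$ large compared to $K$. None of this is ``many rotations at non-domination points''; it is one swap, then many small diagonal corrections that do not need any further non-domination.

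Second, in step~(4) you cannot assert $\|F-\Phi_{H_1}^\tau(q)\|<\delta$: the original $\Phi_{H_1}^\tau(q)$ may have norm of order $e^{\lambda\tau}$, while $F$ is elliptic. Franks' lemma (Theorem~\ref{Vivier}) must be applied separately on each short flowbox (time-$1$ or time-$m$ arc), where the individual perturbations really are $\delta$-small; the global $F$ is then the composition of these locally realized pieces. Once you reorganize step~(3) along the three-stage scheme above, this issue disappears automatically, since each stage already lives on short arcs.
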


\begin{proof}
We choose a $C^\infty$ Hamiltonian $H_0$ $C^2$-close to $H$ satisfying the same hypothesis. 
The idea of the proof is to adapt the proofs of the lemmas 3.10-3.12 in~\cite{BD} to the Hamiltonian setting. This requires the use of the perturbation from~\cite[Lemma 4.2]{BJD} considered in local symplectic flowbox coordinates about $\Gamma$ (see~\cite[Theorem 4.1]{BJD}).

So,~\cite[Lemma 3.10]{BD} implies that there exists $m(\epsilon,\theta)\in\mathbb{N}$, such that given any periodic orbit $q$ of period $\tau(q)>m$ satisfying (1) and (2) above, we can find a Hamiltonian $H_{1}$ $\epsilon/3$-$C^2$-close such that
\begin{equation}\label{prop ii}
\Phi_{H_{1}}^{m}(\mathcal{N}_{q}^{u})=\mathcal{N}_{\varphi_{H}^{m}(q)}^{s}.
\end{equation}

By~\cite[Lemma 3.11]{BD}, there exists $K=K(\theta,m)>0$ and $H_{2}$ $\epsilon/3$-$C^2$-close to $H_{1}$ such that
\begin{equation}\label{prop II}
\|\Phi_{H_{2}}^{\tau}(q)\|<K,
\end{equation}
where $q\in\Gamma$ and $\Gamma$ is any closed orbit of period $\tau$ satisfying the statements (1) and (2) of the lemma.
Notice that \eqref{prop ii} let us blend different expansion rates, thus obtaining $K$ not depending on how large is $\tau$.

Now, define $T>m$ depending on $K$ as in ~\cite[Lemma 3.12]{BD}, and take $\tau>T$. Let $q\in M$ be a periodic point of period $\geq\tau$ and, for simplicity, assume that $\tau\in\mathbb{N}$.
We concatenate its orbit $\Gamma$ in $\tau$ time-one disjoint intervals. In each one of these intervals we consider an abstract symplectic action shrinking by a factor of order $\epsilon$ along the direction $\mathcal{N}^{u}$ and stretching by a factor of order $\epsilon$ along the direction $\mathcal{N}^{s}$. 
Then, we use Theorem~\ref{Vivier} to realize this perturbation by a Hamiltonian $\epsilon/3$-$C^2$-close. 
Since this procedure is repeated $\tau$-times and $\tau$ is very large while $K$ remains bounded (see \eqref{prop II}), it follows that there exists a perturbation $\widetilde H$ such that $\|\Phi_{\widetilde H}^{\tau}(q)\|=1$ and $q$ is of elliptic type.

\end{proof}

%%%%%%%%%%%%%%%%%%%%%%%%%%%%%%%%%%%%%%%%%%%%%%%%%%%%%%%%%%%%
\subsection{Proof of Theorem~\ref{thm 1}}

Take $H\in C^s(M,\Rr)$, $2\leq s\leq\infty$, $\epsilon>0$, a regular energy surface $\EE$ far from Anosov and an open subset $U\subset M$ such that $\EE\cap U\not=\emptyset$.

As mentioned earlier, for a $C^2$-generic Hamiltonian closed orbits are dense and there are no parabolic ones.
We further assume that there are no closed elliptic orbits through $U$. 

For a hyperbolic closed orbit $\Gamma$, consider the angle between the stable and unstable directions.
If it is smaller than $\theta$ given by Proposition~\ref{propA}, then we have a Hamiltonian $\epsilon$-$C^2$-close for which $\Gamma$ is elliptic.
Now, let $m$ and $T$ as in Proposition~\ref{propB}.
If $\Gamma$ has angle larger than $\theta$ with least period greater than $T$ and no $m$-dominated splitting, then $\Gamma$ is again elliptic for a $\epsilon$-$C^2$-close Hamiltonian.

Therefore, it remains to consider the case of all closed orbits through ${U}$ being hyperbolic with angle greater than $\theta$ and with $m$-dominated splitting. We will show that this is unattainable for a $C^2$-generic Hamiltonian.

By Lemma~\ref{PDD}, the above parameters $m$ and $\theta$ vary continuously with $H$. In fact, for $\widehat{\epsilon}>0$ small enough, every closed orbit associated to a $\widehat{\epsilon}$-$C^2$-close Hamiltonian has uniform parameters of dominated splitting, still denoted by $m$ and $\theta$.

 From~\cite[Theorem 1]{BJD}, and since $\mathcal{E}$ is not approximated by an Anosov regular energy surface, there exists a Hamiltonian $H_{2}$ $\widehat{\epsilon}/3$-$C^2$-close to $H$ and a regular energy surface $\widehat{\EE}$ (arbitrarly close to $\EE$) of $H_2$ having zero Lyapunov exponents $\mu_{\widehat{\EE}}$-a.e. for $\varphi_{H_2}^t$. Here $\mu_{\widehat\EE}$ is the natural induced invariant measure on the energy level sets (see~\cite{BJD}). Denote by $Z\subset\widehat\EE$ the full measure subset of such points with zero Lyapunov exponents.
Using Oseledets' theorem (cf. e.g.~\cite{BJD}), for $x\in Z$ and any $\delta>0$, there exists $t_{x}\in\mathbb{R}$ such that 
\begin{equation}\label{Oseledets}
e^{-\delta t}<\|\Phi_{H_{2}}^{t}(x)\|<e^{\delta t}
\quad\text{whenever}\quad
t\geq t_{x}.
\end{equation}

Let $P\subset U\cap Z$ be the positive measure (same as the one of $U\cap Z$) Poincar\'e recurrence subset associated to $H_{2}$ and $\mu_{\widehat{\EE}}$.
Then, for $x\in P$, there exists a strictly increasing sequence of integers $n_k(x)$ such that $\varphi_{H_{2}}^{n_k(x)}(x) \in U\cap Z$ and 
$$
\lim_{k\to\infty}\varphi_{H_{2}}^{n_k(x)}(x) = x.
$$
Let $\mathcal{T}$ denote the set of positive return times to $U\cap Z$ by $\varphi_{H_{2}}^t$.

Take $\tau\in \mathcal{T} \cap [t_{x},+\infty)$. By the closing lemma (Theorem~\ref{closing lemma}), the $\varphi_{H_{2}}^t$-orbit of $x$ can be approximated up to the (very long) time $\tau$ by a closed orbit $\Gamma$ of a $\widehat\epsilon/3$-$C^2$-close Hamiltonian $H_{1}$.
If we take a small enough $r>0$ in the closing lemma, then \eqref{Oseledets} gives 
$$
e^{-\delta \tau}<\|\Phi_{H_{1}}^{\tau}(p)\|<e^{\delta \tau}
\quad\text{with}\quad
p\in\Gamma.
$$
Notice that $\tau\approx \ell$, the least period of $\Gamma$.

We have thus created a closed orbit where $\Phi_{H_1}^\tau(p)$ is ``weakly'' hyperbolic, i.e. the exponent $\delta$ is as small as we want.
We now perturb it
% (like in~\cite[Proposition 3.13]{BD}) 
to cancel the remaining hyperbolicity, and create by Franks' lemma -- Theorem~\ref{Vivier} an elliptic closed orbit.
It gives a Hamiltonian $H_{0}$ $\widehat{\epsilon}/3$-$C^2$-close verifying those conditions. 
This contradicts the fact asserted earlier that every Hamiltonian $\widehat{\epsilon}$-$C^2$-close to $H$ has closed orbits with uniform parameters of dominated splitting $m$ and $\theta$. 
The proof of Theorem~\ref{thm 1} is therefore complete.

%%%%%%%%%%%%%%%%%%%%%%%%%%%%%%%%%%%%%%%%%%
\section{Proof of Theorem \ref{thm 3}}\label{section:proofthm3}

Consider the set 
$$
\MM=M\times C^s(M,\Rr)
$$
endowed with the standard product topology. Recall that it is used the $C^2$ topology on the second component.
Given $p\in M$, $\EE_p(M)$ is the energy surface passing through $p$.
The subset
$$
\A=\left\{(p,H)\in\MM\colon \EE_p(H) \text{ is an Anosov regular energy surface} \right\}.
$$
is open by structural stability.
Let $\overline{\A}$ be its closure with complement $\NN=\MM\setminus \overline{\A}$.

%Furthermore, define $\mathcal{O}$ to be the subset of $\MM\times \mathbb{R}^+$ (endowed with the product topology where in $\Rr^+$ we take the Euclidean topology) of all triples $(p,H,\epsilon)$ such that $H$ has a closed elliptic orbit through the $3$-dim ball $B(p,\epsilon)\cap \EE_p(H)$. Since $M$ is $4$-dimensional, the elliptic orbits are stable, hence $\mathcal{O}$ is open.

Given $\epsilon>0$ and an open set $\mathcal{U}\subset \mathcal{N}$, define the subset $\mathcal{O}(\mathcal{U},\epsilon)$ of pairs $(p,H)\in\mathcal{U}$ for which $H$ has a closed elliptic orbit through the $3$-dim ball $B(p,\epsilon)\cap \EE_p(H)$. 
%$$
%\mathcal{O}(\mathcal{U},\epsilon)=
%\left\{ (p,H) \in \mathcal{U}\colon (p,H,\epsilon)\in \mathcal{O}\right\}.
%$$
It follows from Theorem~\ref{thm 1} and the fact that in the $4$-dim case elliptic orbits are stable, that $\mathcal{O}(\mathcal{U},\epsilon)$ is dense and open in $\mathcal{U}$.

%Now, using the inverse of the Darboux charts $\phi_{i}\colon U_{i}\rightarrow \mathbb{R}^{4}$, $1\leq i\leq N$, choose dense sequences in each $\phi_i(U_i)\subset\mathbb{R}^4$, and construct in $M$ a dense sequence $x_n$.
Let $\epsilon_k$ be a positive sequence converging to zero. 
Then, define recursively the sequence of dense and open sets $\mathcal{U}_0 = \mathcal{N}$ and
$$ 
\mathcal{U}_{k}=\mathcal{O}(\mathcal{U}_{k-1}, \epsilon_{k-1}),
\quad k\in\Nn.
$$
Notice that $\cap_{k\in\Nn}\UU_k$ is the set of pairs $(p,H)$ yielding the property that $p$ is a periodic elliptic point for $H$.

Finally, the above implies that $\A\cup\UU_k$ is open and dense in $\MM$, and
$$
\mathfrak{A}:=\bigcap_{k\in\Nn}(\A\cup\UU_k)=\A\cup \bigcap_{k\in\Nn}\UU_k
$$
is residual.
By~\cite[Proposition A.7]{BF}, we write 
$$
\mathfrak{A}=\bigcup_{H\in\mathfrak{R}}\mathfrak{M}_H\times \{H\},
$$
where $\mathfrak{R}$ is $C^2$-residual in $C^s(M,\Rr)$ and, for each $H\in\mathfrak{R}$, $\mathfrak{M}_H$ is a residual subset of $M$, having the following property:
if $H\in\mathfrak{R}$ and $p\in\mathfrak{M}_H$, then $\EE_p(H)$ is Anosov or $p$ is a periodic elliptic point.

%%%%%%%%%%%%%%%%%%%%%%%%%%%%%%%%%%%%%%%%%
\section*{Acknowledgements}

We would like to thank Pedro Duarte for his useful suggestions. 
MB was supported by Funda\c c\~ao para a Ci\^encia e a Tecnologia, SFRH/BPD/
20890/2004. 
JLD was partially supported by Funda\c c\~ao para a Ci\^encia e a Tecnologia through the Program FEDER/POCI~2010. 

%%%%%%%%%%%%%%%%%%%%%%%%%%%%%%%%%%%%%%%%%

\end{document}